\newtheorem{thm}{Theorem}
\newtheorem{lemma}[thm]{Lemma}
\newtheorem{conj}[thm]{Conjecture}
\newtheorem{rem}[thm]{Remark}
\def\F{{\mathbb F}}
\def\L{\mathbb L}
\def\Z{\mathbb Z}
\def\Q{\mathbb Q}
\DeclareMathOperator{\Tr}{Tr}
\begin{document}

\title{On certain values of Kloosterman sums}
\author{Marko Moisio}
\email{mamo@uwasa.fi}
\address{Department of Mathematics and Statistics, University of Vaasa, P.O.
Box 700, FIN-65101 Vaasa, Finland}
\date{\today}                                 

\begin{abstract}Let $K_{q^n}(a)$ be a Kloosterman sum over the finite field $\F_{q^n}$ of characteristic $p$. In this note so called subfield conjecture is proved in case $p>3$: if $a\ne0$ belongs to the proper subfield $\F_q$ of $\F_{q^n}$, then $K_{q^n}(a)\ne-1$. This completes recent works on the subfield conjecture by Shparlinski, and Moisio and Lisonek. The problem is motivated by some applications to bent functions. Moreover, in the course of the proof a large class of translates of Dickson polynomials are shown to be irreducible.   
\end{abstract}

\maketitle


\section{Introduction}

Let $p$ be a prime number and let $q>1$ be a power of $p$. For a positive integer $n$, let $\F_{q^n}$ denote the finite field of order $q^n$ and let $b\in\F_{q^n}^*$. Let $\Tr:\F_{q^n}\rightarrow\F_p$ be the trace function from $\F_{q^n}$ onto $\F_p$.
The Kloosterman sum $K_{q^n}(b)$ on $\F_{q^n}^*$ is defined by
\begin{equation}\label{e:klsum}
K_{q^n}(b)=\sum_{x\in\F_{q^n}^*}\zeta_p^{\Tr(x+bx^{-1})},
\end{equation}
where $\zeta_p=e^{2\pi i/p}$ is a primitive $p$th root of unity.

\begin{rem}Sometimes Kloosterman sums are defined over $\F_{q^n}$ by setting $0^{-1}=0$ and denoted by $\mathcal K_{q^n}(b)$. Then 
$\mathcal K_{q^n}(b):=K_{q^n}(b)+1$. Throughout this note the classical definition~\eqref{e:klsum} of a Kloosterman sums is used. 
\end{rem}

The following conjecture, which is related to the problem of constructing Dillon type bent functions (see~\cite{CG,HK,L,LM}), is implicitly contained in~\cite{L}. 
\begin{conj}
\label{subfield-conjecture}
Assume $n>1$. If $a\in\F_{q}^*$ and $K_{q^n}(a)=-1$, then $q^n=16$.
\end{conj}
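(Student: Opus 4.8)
The plan is to translate the hypothesis $K_{q^n}(a)=-1$ into a single equation involving a Dickson polynomial and then to extract a contradiction from the arithmetic of the number field generated by the base-field Kloosterman sum. First I would invoke the standard Carlitz--Davenport--Hasse lifting: writing $K_q(a)=-(\alpha+\beta)$ with $\alpha\beta=q$, one has $K_{q^n}(a)=-(\alpha^n+\beta^n)$, and since $\alpha^n+\beta^n=D_n(\alpha+\beta,\alpha\beta)$ for the Dickson polynomial $D_n$ of the first kind, this reads
\[
K_{q^n}(a)=-D_n\bigl(-K_q(a),\,q\bigr).
\]
Hence the hypothesis is equivalent to $D_n(t,q)=1$ for $t:=-K_q(a)$; that is, $t$ is a root of $g_n(X):=D_n(X,q)-1\in\Z[X]$. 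Because the element $\sigma_c\in\mathrm{Gal}(\Q(\zeta_p)/\Q)\cong\F_p^*$ sends $K_q(a)$ to $K_q(c^2a)$, the algebraic integer $t$ is real and lies in the maximal real subfield $\Q(\zeta_p)^+$; in particular $\Q(t)\subseteq\Q(\zeta_p)$ is abelian over $\Q$ and is ramified at no prime other than $p$.

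The decisive arithmetic input is the discriminant of $g_n$. Using the identity $D_n(2\sqrt q\cos\psi,q)=2q^{n/2}\cos n\psi$, so that the roots of $g_n$ are $2\sqrt q\cos\psi$ with $\cos n\psi=1/(2q^{n/2})$, one computes $\mathrm{Res}(g_n,g_n')$ directly and finds, for $n$ odd,
\[
\mathrm{disc}(g_n)=\pm\,n^{n}\,(4q^{n}-1)^{(n-1)/2},
\]
together with the analogous value $\pm\,n^{n}(2q^{n/2}-1)^{n/2-1}(2q^{n/2}+1)^{n/2}$ for $n$ even (both are readily checked in the cases $n=2,3$). Since $q$ is a power of $p$, each factor $4q^n-1$ and $2q^{n/2}\pm1$ is $\equiv\pm1\pmod p$, so $\mathrm{disc}(g_n)$ is coprime to $p$ as soon as $p\nmid n$. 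As the minimal polynomial of $t$ divides $g_n$, its discriminant divides $\mathrm{disc}(g_n)$ and is likewise prime to $p$; therefore $p$ is unramified in $\Q(t)$. But $p$ is totally ramified in $\Q(\zeta_p)$, hence ramified in \emph{every} subfield of $\Q(\zeta_p)$ different from $\Q$. This forces $\Q(t)=\Q$, i.e. $K_q(a)\in\Z$, so $g_n$ must have a rational integer root.

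It remains to contradict this last conclusion, and here is where the irreducibility result announced in the abstract enters: I would prove that $g_n(X)=D_n(X,q)-1$ is irreducible over $\Q$ for all $n>1$ in the relevant range, so that in particular it has no rational root, eliminating every $a\in\F_q^*$ with $K_{q^n}(a)=-1$ whenever $p\nmid n$. The main obstacle is precisely this irreducibility statement for translated Dickson polynomials: establishing it — presumably via the factorization of $g_n$ modulo suitable primes, the explicit description of its roots $2\sqrt q\cos\psi$, and the imaginary quadratic field $\Q(\sqrt{1-4q^{n}})$ contained in its splitting field — is the technical heart of the argument, and it is where the hypothesis $p>3$ is needed to exclude the sporadic characteristic-two coincidences responsible for the exceptional value $q^n=16$. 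Finally I would dispose of the residual case $p\mid n$, in which $\mathrm{disc}(g_n)$ is no longer prime to $p$, by peeling off the $p$-part of the extension and using $D_{pm}(X,q)\equiv D_m(X,q)^{p}\pmod p$ to reduce to the prime-to-$p$ situation already handled.
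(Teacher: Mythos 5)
Your starting point is the same as the paper's (Carlitz's lifting formula, here written as $K_{q^n}(a)=-D_n(-K_q(a),q)$), but your discriminant/ramification argument for the case $p\nmid n$ is correct and genuinely different from the paper's route: the formula $\mathrm{disc}(D_n(x,q)-1)=\pm n^n(4q^n-1)^{(n-1)/2}$ for odd $n$ checks out, the discriminant of the minimal polynomial of $t=-K_q(a)$ divides it, and total ramification of $p$ in $\Q(\zeta_p)$ then forces $\Q(t)=\Q$. In fact, at that point you are done more cheaply than you realize: once $K_q(a)\in\Z$, Theorem~\ref{t:int} (the integer case, from \cite{LM}, quoted in the paper) gives $K_{q^n}(a)\ne-1$ because $q^n\ne16$ when $p>3$. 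You do not need any irreducibility of $D_n(X,q)-1$ --- which is fortunate, because that irreducibility is the first genuine gap in your writeup: you declare it the ``technical heart'' and gesture at factorizations modulo suitable primes and the field $\Q(\sqrt{1-4q^n})$, but give no proof. The paper proves the corresponding statement (irreducibility of $D_\ell(x,q)+1$ for odd $\ell$) via Turnwald's criterion combined with the deep Bilu--Hanrot--Voutier theorem on primitive divisors of Lucas numbers, and even ruling out just the rational roots $c=\pm1$ is essentially the same Lucas-sequence question; nothing in your sketch substitutes for that input.

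The second, and more serious, gap is the case $p\mid n$, specifically $n$ a power of $p$. If $n=p^jm$ with $p\nmid m$ and $m>1$, your argument does apply over the base field $\F_{q^{p^j}}$ (this is the paper's Remark~\ref{r:reduction}). But when $n=p^j$ --- after base change, $n=p$ --- there is no prime-to-$p$ part left to reduce to: $\mathrm{disc}(D_p(x,q)-1)=\pm p^p(4q^p-1)^{(p-1)/2}$ is divisible by $p$, so the unramifiedness argument collapses, and the congruence $D_{pm}(x,q)\equiv D_m(x,q)^p\pmod p$ only tells you that the reduction mod $p$ of the minimal polynomial of $t$ is a power of $x-1$, which is no contradiction at all (it is exactly what the paper's Lemma~\ref{l:key} predicts). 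This is precisely the case where the paper needs its full machinery: irreducibility of $D_p(x,q)+1$ (Turnwald plus Bilu--Hanrot--Voutier) pins the degree of the minimal polynomial of $K_q(a)$ at $p$, while $K_q(a)\in\Q(\zeta_p)^+$ bounds that degree by $(p-1)/2$, giving the contradiction. Your proposal contains no workable substitute for this step, so as it stands it proves the conjecture only for $p\nmid n$.
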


In a recent paper by Shparlinski~\cite{S} the following result was obtained.
\begin{thm}[{\rm \cite{S}}]\label{Igor-thm} Let $a\in\F_q^*$, and let $n>s_0(p)$ where
\\
\noindent $s_0(p)$:=
\vskip-13pt
\[
\begin{cases}
15& \hskip-8pt\mbox{ if } p=2,3, \\
\max\{ 2^{p-1}-1, 2000((p-1)\log(3(p-1)))^{12} \} &\hskip-8pt \mbox{ if }p>3.
\end{cases}
\]
Then $K_{q^n}(a)\neq-1$.
\end{thm}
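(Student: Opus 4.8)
The plan is to convert the character-sum statement into a statement about an algebraic (Lucas-type) recurrence and then attack it with estimates for linear forms in logarithms. First I would invoke the Hasse--Davenport--Carlitz lifting relation for Kloosterman sums over a subfield: since $a\in\F_q^*$, the Weil bound $|K_q(a)|\le 2\sqrt q$ shows that the quadratic $T^2+K_q(a)T+q$ has complex-conjugate roots $\omega,\bar\omega$ with $\omega\bar\omega=q$ and $\omega+\bar\omega=-K_q(a)$, and one has $K_{q^n}(a)=-(\omega^n+\bar\omega^n)$ for every $n\ge1$. Writing this through the Dickson polynomial $D_n$, defined by $D_n(Y+q/Y,q)=Y^n+(q/Y)^n$, gives $K_{q^n}(a)=-D_n(-K_q(a),q)$. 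Hence the claim $K_{q^n}(a)\neq-1$ is exactly the assertion that $-K_q(a)$ is \emph{not} a root of the monic integer polynomial $g_n(X):=D_n(X,q)-1$, equivalently $\omega^n+\bar\omega^n\neq1$.

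Next I would exploit that $s:=-K_q(a)$ is a very constrained algebraic integer: it lies in the maximal real subfield of $\Q(\zeta_p)$, so its degree $d$ over $\Q$ divides $(p-1)/2$, and all of its conjugates are again Weil-bounded by $2\sqrt q$ (they are the values $-K_q(c^2a)$, $c\in\F_p^*$). If $g_n(s)=0$ then the minimal polynomial of $s$, of degree $d\le(p-1)/2$, divides $g_n$; so it suffices to show that for $n>s_0(p)$ the polynomial $g_n$ has no irreducible factor of degree $\le(p-1)/2$. The roots of $g_n$ are explicit: writing $\omega=\sqrt q\,e^{i\theta}$ one has $\omega^n+\bar\omega^n=2q^{n/2}\cos n\theta$, so $g_n(2\sqrt q\cos\phi)=0$ precisely when $2q^{n/2}\cos n\phi=1$, i.e. $\cos n\phi=\tfrac12 q^{-n/2}$. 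Thus for $s=2\sqrt q\cos\theta$ to be a root we would need $|n\theta-(2k+1)\tfrac\pi2|\le Cq^{-n/2}$ for some integer $k$, an exponentially small quantity.

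The diophantine core is then to show that this cannot happen once $n$ is large. In the generic case $\theta/\pi\notin\Q$ the quantity $\Lambda:=n\theta-(2k+1)\tfrac\pi2$ is a nonzero linear form in the logarithms $\log(\omega/\bar\omega)$ and $\log(-1)$ with integer coefficients of size $O(n)$, where $\omega$ is algebraic of degree $\le p-1$ with logarithmic height bounded in terms of $p$ and $\log q$. An explicit lower bound of Baker--W\"ustholz/Matveev type then gives $|\Lambda|\gg n^{-\kappa(p)}$, a merely polynomial decay whose exponent and constant are governed by the degree $p-1$ and the heights; comparing with the exponential upper bound $Cq^{-n/2}$ forces $2^{n/2}\le q^{n/2}\ll n^{\kappa(p)}$, hence $n\le s_0(p)$, and tracking the explicit dependence of the constants on degree and height is what I would expect to reproduce the factor $2000((p-1)\log(3(p-1)))^{12}$. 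The degenerate case $\theta/\pi\in\Q$ (equivalently $\omega/\bar\omega$ a root of unity, the supersingular-type values) must be disposed of separately: there $\cos n\theta$ ranges over a finite set of algebraic numbers and can equal the vanishing target $\tfrac12q^{-n/2}$ only if it is $0$, which gives $K_{q^n}(a)=0\neq-1$; the bookkeeping needed to enumerate and exclude these values, together with the degree threshold ensuring the minimal polynomial of $s$ is genuinely forced into $g_n$, is what I would expect to produce a complementary threshold of the shape $2^{p-1}-1$.

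I expect the main obstacle to be the effective, \emph{uniform in $p$} transcendence estimate: one must verify that the linear form $\Lambda$ is genuinely nonzero (ruling out accidental multiplicative relations between $\omega/\bar\omega$ and roots of unity), control its height and the field degree purely in terms of $p$ and $q$, and feed these into an explicit linear-forms-in-logarithms inequality so that the resulting threshold depends on $p$ alone and not on $q$. Squeezing the exponent down to a clean power such as $12$ while keeping the leading constant absolute is the delicate quantitative step; by contrast the Hasse--Davenport reduction, the Dickson reformulation, and the archimedean smallness of $\cos n\theta$ are all formal.
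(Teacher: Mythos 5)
First, a point of order: this paper does \emph{not} prove the statement you were given. Theorem~\ref{Igor-thm} is Shparlinski's result, quoted from \cite{S} as background; the paper's own contribution is Theorem~\ref{t:mainth}, proved by a different mechanism (irreducibility of $D_\ell(x,q)+1$ via Theorem~\ref{t:irred_dickson}, plus the congruence of Lemma~\ref{l:key}). So your proposal can only be compared against what the paper discloses about the proof in \cite{S}: namely that it rests on Carlitz's identity $K_{q^n}(a)=(-1)^{n-1}D_n(K_q(a),q)$ (Theorem~\ref{t:Carlitz}) and on primitive-divisor theory for Lucas numbers --- Theorem~\ref{t:BHV} is explicitly said to be ``used also in \cite{S}'', and the exclusion of $\alpha/\beta$ being a root of unity is credited to ``similar reasoning as in \cite{S}''. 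Indeed the rational case is transparent from the paper's own Theorem~\ref{t:irred_dickson}: if $\omega^n+\bar\omega^n=\pm1$ then $\omega^{2n}-\bar\omega^{2n}=\pm(\omega^n-\bar\omega^n)$ has no primitive divisor, so Theorem~\ref{t:BHV} forces $2n\le30$, which is exactly the threshold $s_0=15$; the constants $2^{p-1}-1$ and $2000((p-1)\log(3(p-1)))^{12}$ in the general case come ready-made from explicit primitive-divisor bounds for Lucas pairs with algebraic parameters, not from a computation Shparlinski performs ad hoc.

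Your first step (the quadratic $T^2+K_q(a)T+q$, $K_{q^n}(a)=-(\omega^n+\bar\omega^n)$, the Weil bound, the degree bound $(p-1)/2$) coincides with that route and is correct. After that you genuinely diverge: instead of invoking a primitive-divisor theorem as a black box, you pass to the archimedean condition $\cos n\theta=\tfrac12 q^{-n/2}$ and attack it with Baker--W\"ustholz/Matveev. This is a legitimate alternative --- it is, at bottom, how Stewart-type primitive-divisor bounds are themselves proved --- and it does yield $n\le C(p)$, but with three caveats. (a) Your claimed lower bound $|\Lambda|\gg n^{-\kappa(p)}$, uniform in $q$, is not what the linear-forms estimates give: the height of $\omega/\bar\omega$ is of size $\log q$, so the bound has the shape $\exp(-C(p)\,\log q\,(\log n)^{O(1)})$. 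The argument survives only because this factor $\log q$ cancels against the exponential upper bound $q^{-n/2}$, leaving $n\le C(p)(\log n)^{O(1)}$; this cancellation is the crux of the uniformity in $q$ and you must say it, since as written your comparison hides a $q$-dependent constant. (b) The nonvanishing of $\Lambda$, which you flag as a main obstacle, is actually automatic: $\Lambda=0$ means $(\omega/\bar\omega)^n=-1$, i.e.\ $\omega^n+\bar\omega^n=0$, i.e.\ $K_{q^n}(a)=0\ne-1$, so there are no ``accidental multiplicative relations'' to rule out. (c) The explicit thresholds \emph{are} the content of the theorem as stated, and your sketch leaves them entirely unexecuted; in particular your guess that $2^{p-1}-1$ arises from the degenerate root-of-unity case is speculative --- in the primitive-divisor literature a term of the shape $2(2^d-1)$ has a different (arithmetic, not archimedean) origin. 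So: sound strategy, genuinely different packaging from \cite{S}, but the quantitative heart that produces the stated $s_0(p)$ is missing.
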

 
Moisio and Lisonek~\cite{LM} showed that Conjecture~\ref{subfield-conjecture} is true also in the remaining cases when $p=2,3$ i.e. for $1<n\le15$. The case $(p,n)=(2,2)$ was first proved in~\cite{CG}. 

Actually, by the proof of Theorem~\ref{Igor-thm} in~\cite{S} it is seen that $s_0(p)=15$ if $K_q(a)\in\Z$, and this observation was used to prove the following

\begin{thm}[{\rm \cite{LM}}]\label{t:int} Let $n>1$ and assume $K_q(a)\in\Z$. If $q^n\ne16$, then $K_{q^n}(a)\ne-1$.
\end{thm}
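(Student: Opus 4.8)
The plan is to pass from the character sum to the pair of ``Kloosterman eigenvalues''. Writing $c:=K_q(a)\in\Z$, the numbers $\alpha,\beta$ with $\alpha+\beta=-c$ and $\alpha\beta=q$ satisfy $K_{q^m}(a)=-(\alpha^m+\beta^m)$ for every $m$, and by the Weil bound $|\alpha|=|\beta|=\sqrt q$. Setting $t_m:=\alpha^m+\beta^m$ we have $t_m\in\Z$ (it obeys $t_m=-c\,t_{m-1}-q\,t_{m-2}$ with $t_0=2$, $t_1=-c$), and in Dickson-polynomial form $t_m=D_m(-c,q)$. Thus $K_{q^n}(a)=-1$ is exactly $t_n=1$, and the task is to show $t_n\neq1$ for $n>1$ unless $q^n=16$. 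I would split on the parity of $n$.

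For \emph{even} $n$ the argument is short and self-contained. From $t_{n/2}^2=t_n+2q^{n/2}$ and $t_n=1$ we get that $1+2q^{n/2}$ is a perfect square, say $X^2$ with $X$ odd, so $(X-1)(X+1)=2q^{n/2}$. Since $q^{n/2}=p^{\ell n/2}$ is a prime power and the two factors on the left are consecutive even integers (hence contribute at least three factors of $2$), the prime $p$ must be $2$; then $X-1$ and $X+1$ are powers of $2$ differing by $2$, forcing $\{X-1,X+1\}=\{2,4\}$, whence $q^{n/2}=4$ and $q^n=16$. This already yields the two genuine exceptions $(q,n)=(4,2),(2,4)$.

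For \emph{odd} $n$ I would first exploit that $\alpha+\beta$ divides $\alpha^n+\beta^n$ over $\Z$, so $t_1\mid t_n$; hence $t_n=1$ forces $t_1=\pm1$, i.e. $c=K_q(a)=\mp1$. The congruence $c\equiv-1\pmod p$ (which holds because every Kloosterman sum is $\equiv-1$ modulo the prime above $p$ in $\Z[\zeta_p]$ and $K_q(a)\in\Z$) then pins $c$ down: if $c=1$ then $p=2$, and reducing $t_n\equiv(-c)^n\equiv-1\pmod q$ against $t_n=1$ gives $q\mid2$, i.e. $q=2$; if $c=-1$, then $t_n\equiv1-nq\pmod{q^2}$, so $t_n=1$ forces $q\mid n$. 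In either subcase $q$ is controlled, and invoking Theorem~\ref{Igor-thm} with $s_0(p)=15$ (legitimate since $K_q(a)\in\Z$) caps $n\le15$; what remains is a finite list of pairs $(q,n)$, each eliminated by computing $t_n$ directly.

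The clean heart is the even case, which both closes that parity and exhibits precisely the $q^n=16$ exceptions. The main obstacle is the odd case: one must combine the weak divisibility information ($q=2$ or $q\mid n$) with an upper bound on $n$ to reduce to finitely many explicit instances and then check them. I expect the care to lie in making the bound on $n$ rigorous---either by leaning on Shparlinski's refined constant or, self-containedly, by pushing the $q$-adic expansion of $t_n=D_n(-c,q)$ (a polynomial of degree $\lfloor n/2\rfloor$ in $q$) far enough to bound $n$ intrinsically---so that no sporadic prime-power solution escapes.
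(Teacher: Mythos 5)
Your proposal is correct, but a preliminary remark is needed: the paper itself contains no proof of Theorem~\ref{t:int} --- it is imported by citation from \cite{LM}, the only indication of its proof being the observation that one may take $s_0(p)=15$ in Theorem~\ref{Igor-thm} when $K_q(a)\in\Z$. Measured against that skeleton, your odd case follows the same route: you invoke exactly this refinement (which ultimately rests on the primitive-divisor theorem of \cite{BHV}) to cap $n\le 15$, and your congruence $K_q(a)\equiv-1\pmod p$ is precisely the paper's Lemma~\ref{l:key} in the degree-one case. What you add is genuinely different and worth having. Your even case is completely self-contained: the identity $t_{n/2}^2=t_n+2q^{n/2}$ turns $t_n=1$ into $(X-1)(X+1)=2q^{n/2}$ with $X=|t_{n/2}|$ odd, and the $2$-adic count forces $p=2$ and then $q^{n/2}=4$; this needs neither \cite{S} nor \cite{BHV}, and it is the only part of any of these arguments that actually pinpoints the exception $q^n=16$. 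Your odd-case reductions ($t_1\mid t_n$ forcing $c=\pm1$, then $t_n\equiv(-c)^n\pmod q$ and $D_n(1,q)\equiv 1-nq\pmod{q^2}$) cut the residual problem to an explicit short list: $q=2$ with odd $3\le n\le 15$, or $q\in\{3,5,7,9,11,13\}$ with $q\mid n$, $n$ odd, $n\le 15$. You defer these finitely many verifications, but they are routine and do succeed (for instance, with $q=2$, $c=1$, the odd values $t_3,t_5,\dots,t_{15}$ are $5,-11,13,5,-67,181,-275$, never $1$). One improvement you may wish to make: your dependence on Shparlinski's refined constant can be removed entirely, since once $c=\pm1$ the hypotheses of Theorem~\ref{t:BHV} hold directly ($\alpha+\beta=\mp1$ and $\alpha\beta=q$ are coprime nonzero integers, and $\alpha/\beta$ is not a root of unity by the same argument as in the proof of Theorem~\ref{t:irred_dickson}); then $t_n=1$ gives $\alpha^{2n}-\beta^{2n}=\alpha^n-\beta^n$, so $\alpha^{2n}-\beta^{2n}$ has no primitive divisor and $2n\le 30$, which yields the same bound $n\le 15$ self-containedly and makes your whole proof independent of \cite{S}.
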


The aim of this paper is to prove the open cases of the Conjecture~\ref{subfield-conjecture} i.e. we shall prove the following

\begin{thm}\label{t:mainth}Let $p>3$, and let $a\in\F_{q}^*$. Then $K_{q^n}(a)\ne-1$.
\end{thm}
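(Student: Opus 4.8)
The plan is to translate the equation $K_{q^n}(a)=-1$ into a statement about Dickson polynomials and then force a contradiction on degrees. First I would record the standard consequence of the Davenport--Hasse relation: factoring $X^2+K_q(a)X+q=(X-\pi)(X-\bar\pi)$, so that $\pi\bar\pi=q$ and $\pi+\bar\pi=-K_q(a)$, one gets $K_{q^n}(a)=-(\pi^n+\bar\pi^n)$. Since the Dickson polynomial $D_n$ of the first kind satisfies $D_n(z+q/z,q)=z^n+(q/z)^n$, this reads
\[
K_{q^n}(a)=-D_n\bigl(-K_q(a),q\bigr).
\]
Writing $t=-K_q(a)$, the assertion $K_{q^n}(a)=-1$ is therefore exactly $D_n(t,q)=1$, i.e.\ $t$ is a root of the integer polynomial $g_n(x):=D_n(x,q)-1$.

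Next I would bound the degree of $t$ over $\Q$. The group $\mathrm{Gal}(\Q(\zeta_p)/\Q)\cong\F_p^*$ acts on \eqref{e:klsum} via $\sigma_c\colon\zeta_p\mapsto\zeta_p^c$, and the substitution $x\mapsto c^{-1}x$ shows $\sigma_c(K_q(a))=K_q(c^2a)$. Hence the conjugates of $t$ are among the $-K_q(c^2a)$, which depend only on $c^2$, so there are at most $(p-1)/2$ of them and $[\Q(t):\Q]\le(p-1)/2$. This is exactly where $p>3$ enters: for $p\le3$ one has $c^2=1$ for all $c$, so $t\in\Z$ and the statement is already covered by Theorem~\ref{t:int}. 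From now on I therefore assume $K_q(a)\notin\Z$, i.e.\ $2\le\deg t\le(p-1)/2$.

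The heart of the argument is to control the factorization of $g_n$ over $\Q$. Over the imaginary quadratic field $F=\Q(\sqrt{1-4q^n})$ the number $\beta=\tfrac12(1+\sqrt{1-4q^n})$ satisfies $\beta+\bar\beta=1$ and $\beta\bar\beta=q^n$, and the $n$ roots of $g_n$ are
\[
x_j=\mu\zeta_n^{\,j}+\frac{q}{\mu\zeta_n^{\,j}}\qquad(0\le j<n),
\]
where $\mu^n=\beta$ and $|\mu|=\sqrt q$. Studying the action of $\mathrm{Gal}(\overline\Q/\Q)$ on $\{x_j\}$ by Kummer theory over $F$ — controlling when $\beta$ is an $n$th power and tracking the cyclotomic part — I would prove that $g_n(x)=D_n(x,q)-1$ is irreducible over $\Q$ for the relevant pairs $(n,q)$; this is the ``large class of irreducible translates of Dickson polynomials'' of the abstract, and it is the main obstacle, since the Galois analysis of the $x_j$ must simultaneously handle the Kummer extension $F(\mu)/F$ and the cyclotomic extension $F(\zeta_n)/F$.

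Granting irreducibility, every root of $g_n$ has degree exactly $n$, so $D_n(t,q)=1$ would force $\deg t=n\le(p-1)/2$; in particular $K_{q^n}(a)\ne-1$ for all $n>(p-1)/2$. Invoking Shparlinski's Theorem~\ref{Igor-thm} to dispose of $n>s_0(p)$ leaves only a bounded range of exponents, and the genuinely delicate cases are the small ones $2\le n\le(p-1)/2$. For these I would compare the two descriptions of the Galois orbit of $t$: its conjugates are the untwisted Kloosterman values $-K_q(c^2a)$ and hence lie in the totally real subfield of the CM field $\Q(\pi)$, which is too small to carry the cyclotomic data present in the twisted roots $x_j$ unless $n$ collapses. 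Assembling these pieces yields $D_n(t,q)\ne1$, hence $K_{q^n}(a)\ne-1$, for all $n>1$.
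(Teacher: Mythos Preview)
Your overall architecture matches the paper's: translate $K_{q^n}(a)=-1$ into ``$K_q(a)$ is a root of a translate of a Dickson polynomial,'' bound $[\Q(K_q(a)):\Q]\le (p-1)/2$, prove the Dickson translate is irreducible, and extract a contradiction on degrees. But two of your steps are not proofs, and these are exactly where the paper does the work.

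\textbf{Irreducibility.} You write that you ``would prove'' $D_n(x,q)-1$ is irreducible by Kummer theory over $F=\Q(\sqrt{1-4q^n})$, tracking the Galois action on the $x_j$. This is only a plan. Carrying it out means controlling simultaneously the Kummer extension $F(\mu)/F$ with $\mu^n=\beta$, the cyclotomic extension $F(\zeta_n)/F$, their interaction, and then descending from $F$ to $\Q$; you also have to deal with even $n$, where $D_n(x,q)-1$ is an even polynomial and the $\pm x_j$ pairing already threatens irreducibility. The paper avoids all of this by first reducing to a \emph{prime} exponent $\ell$ (if $K_{q^\ell}(a)\ne-1$ for every prime $\ell$ then $K_{q^n}(a)\ne-1$ for all $n>1$), and then proving irreducibility of $D_\ell(x,q)+1$ for odd $\ell$ via Turnwald's reducibility criterion together with the Bilu--Hanrot--Voutier theorem on primitive divisors of Lucas numbers. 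This is the ``large class of irreducible translates of Dickson polynomials'' advertised in the abstract, and it is proved, not sketched.

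\textbf{The endgame.} Granting irreducibility, you correctly obtain $n\le (p-1)/2$, but then your treatment of the remaining range $2\le n\le (p-1)/2$ is the sentence ``the totally real subfield of the CM field $\Q(\pi)$ \ldots\ is too small to carry the cyclotomic data present in the twisted roots $x_j$ unless $n$ collapses.'' That is not an argument: $\Q(\zeta_p)^+$ is cyclic of degree $(p-1)/2$ and has a subfield of every degree dividing $(p-1)/2$, so you must actually exclude that the degree-$n$ field generated by a root of $D_n(x,q)-1$ coincides with one of them. Invoking Shparlinski here is also beside the point, since his bound $n>s_0(p)$ is far above $(p-1)/2$. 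The paper's device is a short $p$-adic computation (its Lemma~10): the minimal polynomial $g(x)$ of $K_q(a)$ reduces mod $p$ to $(x+1)^{\deg g}$, while $D_\ell(x,q)+1\equiv x^\ell+1\pmod p$ because every intermediate coefficient of $D_\ell(x,q)$ carries a factor $q$. Equating gives $(x+1)^\ell=x^\ell+1$ in $\F_p[x]$, hence $\ell=p$, contradicting $\ell\le (p-1)/2$. The case $\ell=2$ is handled by the same mod-$p$ comparison (and the integer case by Theorem~\ref{t:int}).

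In short: your outline is right, but the two substantive steps---irreducibility of the Dickson translate and the contradiction for small exponents---are exactly where the paper supplies concrete tools (Turnwald plus Bilu--Hanrot--Voutier, and a mod-$p$ coefficient identity), and your proposal leaves both as promises.
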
 
 
The idea of the proof is based on the observation that if $K_{q^n}(a)=-1$ for some $a\in\F_{q}^*$, then the minimal polynomial of $K_q(a)$ over $\Z$ must be a factor of $D_n(x,q)+1$ where $D_n(x,q)$ is a Dickson polynomial. To show that this is impossible, we prove Theorem~\ref{t:irred_dickson} below, which provides us a large class of irreducible (translates of Dickson) polynomials and in particular, shows that $D_n(x,q)+1$ is irreducible if $n$ is odd. The proof uses a reducibility result on translates of Dickson polynomials by Turnwald~\cite{T} together with the deep result on the primitive divisors of Lucas numbers by Bilu, Hanrot, and Voutier~\cite{BHV}. 

We end this section with a simple observation:

\begin{rem}\label{r:reduction} Let $\ell$ be a prime. Assume $K_{q^\ell}(a)\ne-1$, where $\F_q$ is any finite field containing $a$. Then $K_{q^{s\ell}}(a)\ne-1$ for all positive integers $s$.
\end{rem}

\section{Proof of Theorem~\ref{t:mainth}}

Let $a$ be a fixed element in $\F_q^*$, and assume that $p>3$ and $n>1$. The following result by Carlitz is a key (see~\cite{C} or~\cite[Theorem 5.46]{LN}). 
 
\begin{thm}[{\rm \cite{C}}]\label{t:Carlitz}Let
\[
	D_n(x,q)=\sum_{i=0}^{\lfloor n/2\rfloor}\frac{n}{n-i}\binom{n-i}i(-q)^ix^{n-2i}\in\Z[x]
\]
be the Dickson polynomial of the first kind of degree $n$ with parameter $q$. Then,
\[
	K_{q^n}(a) =(-1)^{n-1}D_n(K_q(a),q).
\]
\end{thm}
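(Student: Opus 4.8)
The plan is to split the asserted identity into two essentially independent pieces: a multiplicative law describing how $K_{q^m}(a)$ depends on $m$, and the functional equation satisfied by $D_n$. To this end I would attach to $K_q(a)$ the two algebraic numbers $\omega_1,\omega_2$, defined as the roots of $T^2 + K_q(a)\,T + q$, so that $\omega_1+\omega_2=-K_q(a)$ and $\omega_1\omega_2=q$; by the Weil bound they are complex conjugate of modulus $\sqrt q$. The whole theorem then rests on the lifting relation
\[
K_{q^m}(a) = -(\omega_1^m+\omega_2^m)\qquad\text{for all }m\ge 1,
\]
which for $m=1$ is nothing but the definition of the $\omega_i$.

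Establishing this relation is the heart of the matter and the only step carrying genuine arithmetic content. I see two routes. The structural one recognises $-K_{q^m}(a)$ as the trace of the $m$-th power of the Frobenius on a two-dimensional space — the stalk of the rank-two Kloosterman sheaf, equivalently the first cohomology of the Artin–Schreier cover $y^q-y=x+a x^{-1}$ of the punctured line; rationality of the associated $L$-function forces exactly two inverse roots $\omega_1,\omega_2$ with $\omega_1\omega_2=q$, and $-K_{q^m}(a)=\omega_1^m+\omega_2^m$ is then the Grothendieck–Lefschetz trace formula. The classical, self-contained route (the one underlying Carlitz's original argument) first expands the base sum in Gauss sums,
\[
K_q(a)=\frac{1}{q-1}\sum_{\chi}G_q(\chi)^2\,\bar\chi(a),
\]
and then propagates this to $\F_{q^n}$ via the Hasse–Davenport relation $G_{q^n}(\chi\circ N)=(-1)^{n-1}G_q(\chi)^n$ for characters lifted through the norm $N$, noting that $N(a)=a^n$ for $a\in\F_q^*$. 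Here the delicate point, and the reason this route is not automatic, is bookkeeping the contributions of the characters of $\F_{q^n}^*$ that do \emph{not} descend from $\F_q^*$ and checking that they organise into the clean power-sum formula. Either way, this lifting step is where all the work lies.

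Granting the lifting law, the remainder is formal. The explicit coefficients displayed in the statement are precisely Waring's formula, so $D_n$ obeys the functional equation $D_n(u+v,q)=u^n+v^n$ whenever $uv=q$. I would apply this with $u=-\omega_1$, $v=-\omega_2$, for which $u+v=K_q(a)$ and $uv=\omega_1\omega_2=q$, obtaining
\[
D_n(K_q(a),q)=(-\omega_1)^n+(-\omega_2)^n=(-1)^n(\omega_1^n+\omega_2^n)=(-1)^{n+1}K_{q^n}(a).
\]
Since $(-1)^{n+1}=(-1)^{n-1}$, this rearranges to $K_{q^n}(a)=(-1)^{n-1}D_n(K_q(a),q)$, as claimed. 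The only care needed at this stage is the sign introduced by passing from $\omega_i$ to $-\omega_i$; beyond that, everything follows from the Waring identity together with the lifting relation, which remains the single substantial obstacle.
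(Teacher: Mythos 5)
The first thing to note is that the paper gives no proof of this statement at all: it is quoted as Carlitz's theorem, with \cite{C} and \cite[Theorem 5.46]{LN} as references, and used as a black box. So there is no internal argument to compare against; what can be assessed is whether your proposal constitutes a proof. Your decomposition is exactly the standard derivation from the literature. Indeed, \cite[Theorem 5.46]{LN} is stated precisely in the form of your lifting law: $K_{q^n}(a)=-(\omega_1^n+\omega_2^n)$ where $\omega_1+\omega_2=-K_q(a)$ and $\omega_1\omega_2=q$. The formal half of your argument is correct and complete: Waring's formula gives $D_n(u+v,q)=u^n+v^n$ whenever $uv=q$; taking $u=-\omega_1$, $v=-\omega_2$ yields $u+v=K_q(a)$, $uv=q$, and hence $D_n(K_q(a),q)=(-1)^n(\omega_1^n+\omega_2^n)=(-1)^{n+1}K_{q^n}(a)$, which is the claimed identity since $(-1)^{n+1}=(-1)^{n-1}$. (One small remark: the Weil-bound observation that $\omega_1,\omega_2$ are complex conjugates of modulus $\sqrt q$ is never used; the computation is purely algebraic in the roots of $T^2+K_q(a)T+q$.)

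The gap, which to your credit you flag yourself, is that the lifting law is asserted rather than proved, and it is the \emph{entire} content of the cited theorem; everything else in your write-up is sign bookkeeping. Your cohomological sketch is correct in outline but imports machinery far heavier than anything in this elementary paper. Your ``classical'' sketch, as written, stalls exactly where you say it does: the identity $K_{q^n}(a)=\frac{1}{q^n-1}\sum_{\Lambda}G_{q^n}(\Lambda)^2\bar\Lambda(a)$ runs over \emph{all} multiplicative characters $\Lambda$ of $\F_{q^n}^*$, while the Hasse--Davenport relation $G_{q^n}(\chi\circ N)=(-1)^{n-1}G_q(\chi)^n$ covers only the $q-1$ characters factoring through the norm, and the remaining characters do not cancel term by term for a fixed $n$. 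The known elementary proofs (Carlitz's included) do not work one $n$ at a time: one forms the generating function $\sum_{n\ge1}K_{q^n}(a)t^n/n$, organizes the characters of the various extensions into lifts from their fields of definition, and shows that the exponential is the reciprocal of a degree-two polynomial $(1-\omega_1t)(1-\omega_2t)$; the power-sum formula then follows by logarithmic differentiation. So your proposal is a correct reduction to the right statement, but it is not a self-contained proof unless you either carry out that rationality argument or simply cite \cite[Theorem 5.46]{LN} for the lifting relation --- which, given that the paper itself treats the whole theorem as a citation, would be a perfectly legitimate way to close it.
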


The next two theorems are used to show that $D_n(x,q)+1$ is irreducible if $n$ is any odd integer. The first one is actually a special case of the Theorem in~\cite{T}.
  
\begin{thm}[{\rm \cite{T}}]\label{t:Tu} Let $r,n$ be integers with $n>0$ odd. Then, $D_n(x,r)+1$ is reducible in $\Q[x]$ if and only if there is a prime factor $\ell$ of $n$ such that $D_\ell(c,r^{n/\ell})=-1$ for some $c\in\Q$. 
\end{thm}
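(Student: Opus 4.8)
The plan is to pass to the splitting field through the standard substitution for Dickson polynomials and convert the reducibility question into a transparent statement about a permutation group. Writing $x=y+r/y$, the defining identity $D_n(y+r/y,r)=y^n+r^ny^{-n}$ turns $D_n(x,r)+1=0$ into $y^{2n}+y^n+r^n=0$. Setting $Y=y^n$, the roots fall into two fibres governed by the roots $\rho_1,\rho_2$ of $Y^2+Y+r^n$, which satisfy $\rho_1\rho_2=r^n$, so that $y\mapsto r/y$ interchanges the fibres. Fixing a primitive $n$-th root of unity $\zeta$ and an $n$-th root $y_0$ of $\rho_1$, the $n$ roots of $D_n(x,r)+1$ are $x_j=\zeta^jy_0+\zeta^{-j}ry_0^{-1}$, indexed by $j\in\Z/n\Z$, and the splitting field is $\Q(\zeta,y_0,\sqrt{1-4r^n})$.

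Next I would compute the Galois action on this index set. Since any automorphism either fixes or swaps $\rho_1,\rho_2$ and carries $y_0$ to $\zeta^by_0$ or to $\zeta^bry_0^{-1}$, a direct calculation shows that every $\sigma\in G:=\mathrm{Gal}$ acts on the indices by an affine map $j\mapsto\varepsilon a j+c$ with $\varepsilon\in\{\pm1\}$, $a\in(\Z/n\Z)^\times$, $c\in\Z/n\Z$; thus $G$ embeds into the affine-type group $A_n$ generated by $\mathrm{AGL}_1(\Z/n\Z)$ and the involution $j\mapsto -j$. Irreducibility of $D_n(x,r)+1$ is then equivalent to transitivity of $G$ on $\Z/n\Z$. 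For the right-hand side of the statement I would exploit the two ways of peeling off a prime, $D_n(x,r)=D_\ell\!\bigl(D_{n/\ell}(x,r),r^{n/\ell}\bigr)=D_{n/\ell}\!\bigl(D_\ell(x,r),r^\ell\bigr)$: evaluating the first shows that $D_{n/\ell}$ sends $x_j$ to a root $t_{j\bmod\ell}$ of $D_\ell(Y,r^{n/\ell})+1$, and these $\ell$ roots are permuted by the reduction of the affine $G$-action to $\Z/\ell\Z$. Hence $D_\ell(c,r^{n/\ell})=-1$ has a rational solution exactly when $G$ fixes a point of $\Z/\ell\Z$. With this dictionary the backward direction is immediate: a fixed residue $k_0$ modulo $\ell$ makes $\{j:j\equiv k_0\bmod\ell\}$ a proper $G$-invariant set, equivalently $D_{n/\ell}(x,r)-c$ is a proper rational factor of $D_n(x,r)+1$.

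The content is the forward direction, which I would phrase as: $G\le A_n$ is intransitive on $\Z/n\Z$ if and only if its reduction modulo some prime $\ell\mid n$ has a fixed point. The prime case $n=\ell$ is clean: the translations form the unique (normal) Sylow-$\ell$ subgroup of $A_\ell$, so $G$ either contains all translations modulo $\ell$ and is transitive, or it is an $\ell'$-group whose linear part $\sigma\mapsto\varepsilon a$ embeds it injectively into the cyclic group $(\Z/\ell\Z)^\times$; a generator then has a unique fixed point, which is fixed by all of $G$. For general odd $n$ I would induct on the number of prime factors using $D_n(x,r)=D_{n/\ell}(D_\ell(x,r),r^\ell)$: the inductive hypothesis (applied with parameter $r^\ell$) makes the outer factor $D_{n/\ell}(Y,r^\ell)+1$ irreducible, after which irreducibility of the composite reduces to the irreducibility of the inner degree-$\ell$ extension $D_\ell(x,r)-\theta$ over $\Q(\theta)$, where $\theta$ is a root of the outer factor; this is a Capelli-type criterion for the associated binomial $\xi^\ell-(\cdot)$, and the prime-$\ell$ hypothesis is exactly what guarantees it.

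The main obstacle is this last Capelli step, i.e., controlling the $\ell$-th power condition over $\Q(\theta)$ when $\ell^2\mid n$, since the affine multiplier $a$ couples the Chinese-remainder components and intransitivity need not already be visible modulo $\ell$ rather than modulo $\ell^2$. Here the hypothesis that $n$ is \emph{odd} is essential: every prime $\ell\mid n$ is odd, so Capelli's criterion reduces to the single clause ``$c$ is not an $\ell$-th power'' (the exceptional $4\mid\ell$ case cannot occur), and $2\in(\Z/\ell\Z)^\times$ prevents the involution $j\mapsto -j$ from producing invariant blocks of even size. This is precisely why the clean equivalence ``reducible iff a rational root appears at some prime level'' holds for odd $n$ while failing in general, and the careful interaction of the affine multiplier with the sign across the prime-power components is the delicate part I expect to require the most work.
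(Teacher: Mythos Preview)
The paper does not prove this theorem at all: it is quoted verbatim from Turnwald~\cite{T} (``The first one is actually a special case of the Theorem in~\cite{T}'') and is used as a black box in the proof of Theorem~\ref{t:irred_dickson}. There is therefore no ``paper's own proof'' to compare your proposal against.

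That said, your sketch is in the right spirit and is close to how such results are actually proved: the substitution $x=y+r/y$, the composition identity $D_{mn}(x,r)=D_m(D_n(x,r),r^n)$, and the translation of reducibility into a transitivity question for an affine-type Galois action on $\Z/n\Z$ are exactly the standard ingredients. Your prime case is correct (for odd $\ell$ your group $A_\ell$ is just $\mathrm{AGL}_1(\Z/\ell\Z)$, and an intransitive subgroup there must miss all nontrivial translations, hence embed in the cyclic $(\Z/\ell\Z)^\times$ and fix a point). What you yourself flag as ``the main obstacle'' --- the Capelli/Kummer step over $\Q(\theta)$ when $\ell^2\mid n$, i.e.\ showing that intransitivity descends to some \emph{prime} quotient rather than merely a prime-power one --- is genuinely the heart of the matter and is not carried out in your proposal. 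In Turnwald's argument this is handled by a careful analysis equivalent to your inductive scheme; your write-up stops just short of it, so as it stands the forward direction is a plan rather than a proof.
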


We recall a deep result from number theory on the primitive divisors of Lucas numbers, used also in~\cite{S}. Let $\L$ be an algebraic number field. Given two algebraic integers $\alpha,\beta\in\L$ and a positive integer $k$ we say that $\alpha^k-\beta^k$ has a {\it primitive divisor} if there is a prime ideal $\mathfrak p$ in the ring of integers of $\L$ which divides $\alpha^k-\beta^k$ but does not divide $\alpha^j-\beta^j$ for $j=1\dots,k-1$.

\begin{thm}[{\rm \cite{BHV}}]\label{t:BHV} Let $\alpha,\beta$ be algebraic integers such that $\alpha+\beta$ and $\alpha\beta$ are nonzero coprime rational integers and $\alpha/\beta$ is not a root of unity. Then $\alpha^k-\beta^k$ has a primitive divisor for every integer $k>30$.
\end{thm}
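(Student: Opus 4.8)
The plan is to establish a primitive divisor of $\alpha^k-\beta^k$ for every $k>30$ by factoring $\alpha^k-\beta^k$ into cyclotomic pieces, showing that the relevant piece must be large unless $k$ is small, and then disposing of the finitely many surviving indices by explicit computation; this is a deep statement, so the sketch below is only the skeleton of the argument.

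First I would introduce the homogenized cyclotomic numbers $\Phi_d(\alpha,\beta)=\prod_{\gcd(j,d)=1}(\alpha-\zeta_d^j\beta)$, where $\zeta_d$ is a primitive $d$th root of unity, so that $\alpha^k-\beta^k=\prod_{d\mid k}\Phi_d(\alpha,\beta)$ and each $\Phi_d(\alpha,\beta)$ is a rational integer, being symmetric in the conjugate pair $(\alpha,\beta)$ and integral because $\alpha+\beta,\alpha\beta\in\Z$. Any prime $\mathfrak p$ dividing $\alpha^j-\beta^j$ for some $j<k$ already divides some $\Phi_d$ with $d<k$; hence a primitive divisor of $\alpha^k-\beta^k$ is precisely a prime factor of $\Phi_k(\alpha,\beta)$ whose rank of apparition equals $k$. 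A classical lemma of Birkhoff--Vandiver and Carmichael type then shows that $\Phi_k$ can carry at most one non-primitive (intrinsic) prime factor, namely a single prime dividing $k$ and to bounded power, so failure of the primitive-divisor property forces $|\Phi_k(\alpha,\beta)|$ to be bounded by a small quantity, polynomial in $k$.

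Next I would bound $|\Phi_k(\alpha,\beta)|$ from below. Writing $\Phi_k(\alpha,\beta)=\beta^{\phi(k)}\prod_{\gcd(j,k)=1}\bigl((\alpha/\beta)-\zeta_k^j\bigr)$ and using $|\alpha\beta|\ge1$, the size is controlled by how closely the powers of $\alpha/\beta$ approach roots of unity. The genuinely hard case is when $\alpha,\beta$ are complex conjugates, so $|\alpha/\beta|=1$ and the factor $(\alpha/\beta)^k-1$ can be extraordinarily small. To keep it away from $0$ I would invoke lower bounds for linear forms in logarithms of algebraic numbers---Baker--W\"ustholz in general, and the sharp two-logarithm estimates of Laurent--Mignotte--Nesterenko in the borderline case---applied to the form $k\log(\alpha/\beta)-2\pi i m$. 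This yields an effective but very large upper bound $k<C_0$ beyond which a primitive divisor must exist.

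The main obstacle is closing the vast gap between the analytic bound $C_0$ and the sharp threshold $k>30$. I would handle it in two stages: first push $C_0$ down to a computationally feasible range using the most precise available linear-forms constants, and then, parametrizing Lucas pairs by the two integers $\alpha+\beta$ and $\alpha\beta$, reduce each remaining index $k$ to the bounded family of Diophantine conditions $\Phi_k(\alpha,\beta)=\pm(\text{intrinsic factor})$, which can be solved or excluded by machine. Completing this search is what simultaneously proves that no $k>30$ is exceptional and tabulates the finitely many exceptions with $k\le30$. The delicate points are the uniformity of the linear-forms constants across all admissible pairs and organizing the final computation so that it provably terminates.
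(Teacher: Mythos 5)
This statement is not proved in the paper at all: it is Theorem~\ref{t:BHV}, quoted verbatim from Bilu, Hanrot and Voutier~\cite{BHV} and used as a black box in the proof of Theorem~\ref{t:irred_dickson}. So there is no internal proof to compare yours against; the only fair benchmark is the actual proof in~\cite{BHV}, which occupies a long paper together with substantial explicit computations.

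Measured against that benchmark, your outline does trace the genuine strategy: the factorization $\alpha^k-\beta^k=\prod_{d\mid k}\Phi_d(\alpha,\beta)$, the identification of primitive divisors with primitive prime factors of $\Phi_k(\alpha,\beta)$, the classical fact that the non-primitive part of $\Phi_k$ is essentially a single prime dividing $k$ (so that failure of the primitive-divisor property forces $|\Phi_k(\alpha,\beta)|$ to be small), the lower bound on $|\Phi_k(\alpha,\beta)|$ via two-logarithm estimates of Laurent--Mignotte--Nesterenko in the complex-conjugate case, and a final finite verification. But as a proof it has gaps at every load-bearing point, and you say so yourself (``only the skeleton''). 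Concretely: (i) the Birkhoff--Vandiver/Carmichael-type lemma is invoked but not proved, and its correct form for Lucas and Lehmer pairs (the intrinsic prime is the largest prime factor of $k$ and divides $\Phi_k$ only to the first power in the relevant range) requires care; (ii) the linear-forms step is named but not executed --- no explicit constants, no argument for uniformity over all admissible pairs $(\alpha+\beta,\alpha\beta)$ --- and this is precisely where the difference between the threshold $k>30$ and an astronomically large bound is decided; (iii) the disposal of the remaining indices is, in~\cite{BHV}, a reduction to explicit Thue equations whose resolution is itself a serious piece of work, not a routine machine search that obviously terminates. Since essentially all of the difficulty of the theorem lives in (ii) and (iii), the proposal cannot be credited as a proof; within the scope of this paper the correct move is the one the author makes, namely to cite~\cite{BHV}, rather than to attempt a re-proof.
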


\begin{thm}\label{t:irred_dickson}Let $n,r\in\Z$ with $r\ne0,\pm1$ and $n>0$ odd. Then $D_n(x,r)+1$ is irreducible over $\Q$.
\end{thm}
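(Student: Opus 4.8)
The plan is to argue by contradiction, using Turnwald's criterion (Theorem~\ref{t:Tu}) to reduce the reducibility of $D_n(x,r)+1$ to an equation at a single prime, and then to kill that equation with the primitive-divisor theorem (Theorem~\ref{t:BHV}) for large primes and with an explicit finite check for the small ones. Suppose $D_n(x,r)+1$ is reducible over $\Q$. Since $n$ is odd, Theorem~\ref{t:Tu} furnishes an (odd) prime $\ell\mid n$ and some $c\in\Q$ with $D_\ell(c,s)=-1$, where $s:=r^{n/\ell}$; note $|s|=|r|^{n/\ell}\ge2$, so $s\ne0,\pm1$. Because $\ell$ is odd we have $D_\ell(0,s)=0$, so $D_\ell(x,s)+1\in\Z[x]$ is monic with constant term $1$; by the rational root theorem its only possible rational zeros are $c=\pm1$. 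Using the parity relation $D_\ell(-x,s)=-D_\ell(x,s)$, the two cases $D_\ell(1,s)=-1$ and $D_\ell(-1,s)=-1$ collapse to the single condition $|D_\ell(1,s)|=1$, which I must show is impossible.

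Next I would introduce the algebraic integers $\alpha,\beta$, the roots of $t^2-t+s$, so that $\alpha+\beta=1$, $\alpha\beta=s$, and by the defining functional equation of Dickson polynomials $D_\ell(1,s)=\alpha^\ell+\beta^\ell$. Thus the goal becomes $\alpha^\ell+\beta^\ell\ne\pm1$. Here $\alpha+\beta=1$ and $\alpha\beta=s$ are nonzero coprime rational integers, so the only remaining hypothesis of Theorem~\ref{t:BHV} to verify is that $\alpha/\beta$ is not a root of unity. I would check this by computing $\alpha/\beta+\beta/\alpha=(1-2s)/s=1/s-2$: if $\alpha/\beta=\omega$ were a root of unity, then $\omega+\omega^{-1}$ would be a rational algebraic integer, forcing $1/s\in\Z$, which is impossible for an integer $s$ with $|s|\ge2$. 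Hence the non-degeneracy hypothesis holds precisely because of the assumption $r\ne0,\pm1$.

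For large $\ell$ the contradiction is then clean. Assuming $\alpha^\ell+\beta^\ell=\pm1$, factoring gives $\alpha^{2\ell}-\beta^{2\ell}=(\alpha^\ell-\beta^\ell)(\alpha^\ell+\beta^\ell)=\pm(\alpha^\ell-\beta^\ell)$, so every prime ideal dividing $\alpha^{2\ell}-\beta^{2\ell}$ already divides $\alpha^\ell-\beta^\ell$. This contradicts the existence of a primitive divisor of $\alpha^{2\ell}-\beta^{2\ell}$, which Theorem~\ref{t:BHV} guarantees as soon as $2\ell>30$, i.e. $\ell\ge17$. Note that one is forced to pass to the exponent $2\ell$ rather than $\ell$, since replacing $\beta$ by $-\beta$ to handle the sum directly would destroy the rationality of $\alpha-\beta$ needed for Theorem~\ref{t:BHV}.

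The main obstacle is the finite gap left uncovered by Theorem~\ref{t:BHV}, namely the odd primes $\ell\in\{3,5,7,11,13\}$ with $2\ell\le30$. For each of these, $D_\ell(1,s)$ is an explicit integer polynomial in $s$ of degree $(\ell-1)/2$, and $|D_\ell(1,s)|=1$ is a polynomial equation with finitely many roots that I would simply solve; for example $D_3(1,s)=1-3s$ and $D_5(1,s)=5s^2-5s+1$, whose only integer solutions of $|D_\ell(1,s)|=1$ are $s\in\{0,1\}$, and the analogous computations for $\ell=7,11,13$ yield only $s\in\{0,1\}$ as well. In every case the solutions are exactly the degenerate values $s\in\{0,\pm1\}$ excluded by $r\ne0,\pm1$, so no admissible $s$ survives; combined with the large-$\ell$ argument this contradicts the assumed reducibility and proves Theorem~\ref{t:irred_dickson}.
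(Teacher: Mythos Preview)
Your argument is correct and follows essentially the same route as the paper: Turnwald's criterion reduces to a prime $\ell$, the rational root theorem forces $c=\pm1$, the identity $\alpha^{2\ell}-\beta^{2\ell}=\pm(\alpha^\ell-\beta^\ell)$ together with Theorem~\ref{t:BHV} handles $\ell\ge17$, and a direct check finishes $\ell\in\{3,5,7,11,13\}$. The one notable difference is your verification that $\alpha/\beta$ is not a root of unity via $\alpha/\beta+\beta/\alpha=1/s-2$ being a rational algebraic integer, which is cleaner than the paper's divisibility argument in the ring of integers of $\Q(\sqrt{1-4s})$.
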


\begin{proof}We may assume that $n>1$. Let $\ell$ be a prime factor of $n$, and let $s=r^{n/\ell}$. By Theorem~\ref{t:Tu}, it is enough to show that $D_{\ell}(c,s)\ne-1$ for all $c\in\Q$. Assume $D_\ell(c,s)=-1$ for some $c\in\Q$. Since the constant coefficient of $D_\ell(x,s)+1$ is 1, we get $c=\pm1$. On the other hand, it is well known that $D_\ell(c,s)=\alpha^\ell+\beta^\ell$, where $\alpha=\frac12(c+\sqrt{c^2-4s})$ and $\beta=\frac12(c-\sqrt{c^2-4s})$. Now $\alpha+\beta,\alpha\beta\in\Z$, and by the similar reasoning as in~\cite{S} we see that $\rho:=\alpha/\beta$ is not a root of unity: since $-1=D_\ell(c,s)=\alpha^\ell+\beta^\ell$, we get that $-1=\beta^\ell(1+\rho^\ell)$. But $\alpha\beta=s$ and therefore, if $\rho$ is a root of unity, we have that $\beta^\ell(1+\rho^\ell)$ (hence $-1$) is divisible by a prime ideal which divides $s$ in the ring of integers of $\Q(\sqrt{1-4s})$ (change the roles of $\alpha$ and $\beta$ if needed), which is impossible. 

On the other hand, we have $\alpha^{2\ell}-\beta^{2\ell}=(\alpha^\ell+\beta^\ell)(\alpha^\ell-\beta^\ell)=-(\alpha^\ell-\beta^\ell)$ which means that $\alpha^{2\ell}-\beta^{2\ell}$ has no primitive divisors. Hence, by Theorem~\ref{t:BHV}, $2\ell\le30$ i.e. $\ell=3,5,7,11,13$. However, it is easy to see that $D_{\ell}(\pm1,s)\ne-1$ when $\ell=3,5,7,11,13$. 
\end{proof}

We need yet another lemma. We present here a short proof suggested by Daqing Wan in a personal communication with the author. 
 
\begin{lemma}\label{l:key}Let $g(x)=x^t-g_1x^{t-1}+g_2x^{t-2}-\dots+(-1)^{t}g_t\in\Z[x]$ be the minimal polynomial of $K_q(a)$. Then, for $k=1,\dots,t$, we have
\[
	g_k\equiv(-1)^k\binom{t}k\pmod p.
\]
\end{lemma}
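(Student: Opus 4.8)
The plan is to recognize that the family of congruences $g_k\equiv(-1)^k\binom{t}{k}\pmod p$ is nothing but the coefficientwise assertion that $g(x)\equiv(x+1)^t\pmod p$. Indeed, expanding $(x+1)^t=\sum_{k=0}^t\binom{t}{k}x^{t-k}$, the coefficient of $x^{t-k}$ is $\binom{t}{k}$, whereas in $g(x)=x^t-g_1x^{t-1}+\dots+(-1)^tg_t$ the coefficient of $x^{t-k}$ is $(-1)^kg_k$; so the two polynomials agree mod $p$ precisely when $(-1)^kg_k\equiv\binom{t}{k}$, i.e. $g_k\equiv(-1)^k\binom{t}{k}\pmod p$, for every $k$. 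Hence it suffices to prove that $g(x)\equiv(x+1)^t\pmod p$, which will follow once I show that every root of $g$, that is, every Galois conjugate of $K_q(a)$, is congruent to $-1$ modulo a suitable prime above $p$.

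First I would pass to the ring of integers $\Z[\zeta_p]$ of $\Q(\zeta_p)$, in which $p$ is totally ramified: there is a unique prime ideal $\mathfrak p=(1-\zeta_p)$ lying over $p$, and $\mathfrak p\cap\Z=p\Z$. Since $\zeta_p\equiv1\pmod{\mathfrak p}$, every power $\zeta_p^m\equiv1\pmod{\mathfrak p}$, so reducing the defining sum collapses it:
\[
K_q(a)=\sum_{x\in\F_q^*}\zeta_p^{\Tr(x+ax^{-1})}\equiv\sum_{x\in\F_q^*}1=q-1\equiv-1\pmod{\mathfrak p},
\]
the final step because $p\mid q$. Now $K_q(a)\in\Q(\zeta_p)$, which is Galois over $\Q$, so every root of $g$ has the form $\sigma(K_q(a))$ for some $\sigma\in\mathrm{Gal}(\Q(\zeta_p)/\Q)$; and as $\mathfrak p$ is the \emph{unique} prime over $p$, it is fixed by each such $\sigma$. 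Applying $\sigma$ to the congruence above gives $\sigma(K_q(a))\equiv\sigma(-1)=-1\pmod{\mathfrak p}$, so all conjugates reduce to $-1$ simultaneously.

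Writing $g(x)=\prod_i(x-\theta_i)$ over the conjugates $\theta_i$, each factor satisfies $x-\theta_i\equiv x+1\pmod{\mathfrak p}$, whence $g(x)\equiv(x+1)^t\pmod{\mathfrak p}$. Since $g(x)-(x+1)^t\in\Z[x]$ has all its coefficients in $\mathfrak p\cap\Z=p\Z$, this congruence descends to $g(x)\equiv(x+1)^t\pmod p$ in $\Z[x]$, and comparing the coefficients of $x^{t-k}$ finishes the proof. I expect the main conceptual step to be the reformulation as $g(x)\equiv(x+1)^t\pmod p$ together with the requirement that \emph{all} conjugates satisfy the congruence at once; the latter is exactly what the total ramification of $p$ in $\Q(\zeta_p)$ supplies, by forcing the single prime $\mathfrak p$ to be Galois-stable. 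The arithmetic input $K_q(a)\equiv-1\pmod{\mathfrak p}$ is a short, essentially standard reduction, so I anticipate no genuine difficulty there.
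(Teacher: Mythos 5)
Your proof is correct and is essentially the paper's own argument: both rest on reducing the Kloosterman sum modulo $\lambda=1-\zeta_p$ to get $K_q(a)\equiv-1\pmod{\lambda}$, transferring this congruence to all roots of $g$, and descending via $(\lambda)\cap\Z=p\Z$. The only (cosmetic) difference is in the middle step: the paper explicitly identifies the conjugates as the Kloosterman sums $K_q(j^2a)$, $j\in\F_p^*$, each of which satisfies the congruence by the same direct computation, whereas you invoke the Galois-stability of the unique prime $\mathfrak p$ above $p$ to push the single congruence to every conjugate $\sigma(K_q(a))$ --- an equally valid and marginally more self-contained way to reach the same conclusion.
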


\begin{proof}We recall, that every $\Q$-automorphism $\sigma_j$ of $\Q(\zeta_p)$ satisfies $\sigma_j(\zeta_p)=\zeta_p^j$, where $j\in\Z_p^*$ (see~\cite[Cor.~2,~p.195]{IR}), and it follows easily that $\sigma_j(K_q(a))=K_q(j^2a)$. 

Let $\lambda=1-\zeta_p$. Since $(p)=(\lambda)^{p-1}$ in $\Z[\zeta_p]$ (see~\cite[p.197]{IR}), we get by definition~\eqref{e:klsum} (with $n=1$) that $K_q(b)\equiv (q-1)\cdot1\equiv -1\pod\lambda$ for all $b\in\F_q^*$. Since $g_k$ is the $k$th elementary symmetric polynomial in the conjugates of $K_q(a)$, which are $K_q(j^2a)$ for some $j\in\F_p^*$, we now get that 
\[
	g_k\equiv\sum_{1\le i_1<\dots<i_k\le t}(-1)^k=(-1)^k\binom tk\pod\lambda. 
\]
Hence,
$g_k-(-1)^k\binom tk\in(\lambda)\cap\Z=p\Z$ i.e. $g_k\equiv(-1)^k\binom tk\pmod p$.  
\end{proof}

\medskip  

\noindent{\it Proof of Theorem~\ref{t:mainth}}. Let $\ell$ be a prime and assume $K_{q^\ell}(a)=-1$. It then follows from Theorem~\ref{t:Carlitz}, that the minimal polynomial $g(x)$ of $K_q(a)$ divides $D_{\ell}(x,q)+1$ in $\Z[x]$. Let $\bar g(x)\in\F_p[x]$ be the polynomial obtained by reducing the coefficients of $g(x)$ modulo $p$. If $\ell>2$, Theorem~\ref{t:irred_dickson} now implies that $g(x)=D_{\ell}(x,q)+1$. Now, by Lemma~\ref{l:key}, we get $(x+1)^\ell=\bar g(x)=x^{\ell}+1$. Hence $\ell=p$, which is impossible since the degree $\ell$ of $g(x)$ is at most $(p-1)/2$. If $\ell=2$, then either $\bar g(x)=x+1$ or $\bar g(x)=(x+1)^2$. The former case is impossible by Theorem~\ref{t:int}. In the latter case $(x+1)^2$ divides $x^2+1$ in $\F_p[x]$, which is impossible since $p>2$. Remark~\ref{r:reduction} now completes the proof.

\end{document}